\pdfoutput=1
\documentclass[a4paper,12pt]{amsart}
\usepackage{amsmath,amssymb,amsxtra,amscd,mathrsfs,enumitem}
\usepackage[margin=3cm]{geometry} 
\usepackage{tikz} 
\usepackage{color} 
\usepackage{booktabs}
\usepackage{longtable}  
\usepackage{calc} 
\usepackage{hyperref} 
\usepackage[all]{xy}

\newtheorem{theorem}{Theorem} 
\newtheorem{lemma}[theorem]{Lemma} 
\newtheorem{proposition}[theorem]{Proposition} 
\newtheorem{corollary}[theorem]{Corollary} 
 
\newtheorem*{mainresult}{Main Results} 

\theoremstyle{definition}

\newtheorem{remark}[theorem]{Remark}

\usepackage{xspace} 

\newcommand{\C}{\mathbb{C}}
\newcommand{\Z}{\mathbb{Z}} 
\newcommand{\N}{\mathbb{N}}
\newcommand{\Q}{\mathbb{Q}} 
\newcommand{\R}{\mathbb{R}}

\newcommand{\PP}{\mathbb{P}} 

\newcommand{\cO}{\mathcal{O}}

\newcommand{\cN}{\mathcal{N}}

\newcommand{\cC}{\mathcal{C}} 

\newcommand{\hD}{{\widehat{D}}}
\newcommand{\hvarphi}{\hat{\varphi}}

\newcommand{\hjmath}{{\hat{\jmath}}}
\newcommand{\hpi}{\hat{\pi}}
\newcommand{\hp}{{\hat{p}}}

\newcommand{\tX}{{\widetilde{X}}}
\newcommand{\td}{{\tilde{d}}}
\newcommand{\tQ}{{\widetilde{Q}}}
 
\newcommand{\tDelta}{{\widetilde{\Delta}}} 
\newcommand{\ttau}{{\tilde{\tau}}}

\newcommand{\bbf}{\mathbf{f}}

\newcommand{\btau}{\boldsymbol{\tau}} 
\newcommand{\tbtau}{\tilde{\btau}}

\newcommand{\bvarsigma}{\boldsymbol{\varsigma}} 
\newcommand{\ovbvarsigma}{\overline{\bvarsigma}}

\newcommand{\sfF}{\mathsf{F}}
\newcommand{\sfH}{\mathsf{H}}

\newcommand{\frq}{\mathfrak{q}}  
\newcommand{\frs}{\mathfrak{s}}

\newcommand{\frt}{\mathfrak{t}}

\newcommand{\scrF}{\mathscr{F}}

\newcommand{\NE}{\operatorname{NE}} 

\newcommand{\NEN}{\NE_{\N}}
\newcommand{\id}{\operatorname{id}} 
 
\newcommand{\pt}{\operatorname{pt}} 
\newcommand{\GL}{\operatorname{GL}} 
\newcommand{\Hom}{\operatorname{Hom}} 
\newcommand{\Aut}{\operatorname{Aut}} 
\newcommand{\End}{\operatorname{End}} 

\newcommand{\ch}{\operatorname{ch}} 
\newcommand{\QDM}{\operatorname{QDM}}

\newcommand{\pr}{\operatorname{pr}} 
\newcommand{\FT}{\operatorname{FT}}

\newcommand{\dec}{\operatorname{\mathsf{dec}}} 
\newcommand{\Hod}{\operatorname{\mathsf{Hod}}}

\newcommand{\iu}{\mathtt{i}}
\newcommand{\ttt}{\mathtt{t}} 
\newcommand{\st}{{\rm s}}

\newcommand{\bPsi}{\boldsymbol{\Psi}}

\def\corr#1{\left\langle#1 \right\rangle}

\begin{document} 

\title{Notes on the decomposition theorem for blowups} 
\author{Hiroshi Iritani}
\email{iritani@math.kyoto-u.ac.jp} 
\address{Department of Mathematics, Graduate School of Science, 
Kyoto University, Kitashirakawa-Oiwake-cho, Sakyo-ku, 
Kyoto, 606-8502, Japan}


\begin{abstract} 
We discuss arithmetic and Hodge-theoretic properties of the isomorphisms appearing in the decomposition theorem \cite{Iritani:monoidal} for quantum cohomology of blowups. These properties underpin the application to the rationality questions by Katzarkov-Kontsevich-Pantev-Yu \cite{KKPY:birational}. 
\end{abstract} 
\maketitle 


Let $X$ be a smooth projective variety, and let $Z\subset X$ be a smooth subvariety of codimension $r\ge 2$. Let $\tX$ be the blowup of $X$ along $Z$. The \emph{decomposition theorem} \cite[Theorems 1.1, 5.18]{Iritani:monoidal} for blowups establishes the following isomorphism of (formal) quantum $D$-modules:
\begin{equation} 
\label{eq:Psi}
\Psi \colon \QDM(\tX)^{\rm la} \cong \tau^*\QDM(X)^{\rm la} \oplus \bigoplus_{j=0}^{r-2} \varsigma_j^*\QDM(Z)^{\rm la} 
\end{equation} 
for some formal maps $\tau \colon H^*(\tX) \to H^*(X)$ and $\varsigma_j\colon H^*(\tX) \to H^*(Z)$ that induce a formal isomorphism at $\ttau=0$ 
\begin{equation} 
\label{eq:change_of_variables}
H^*(\tX) \to H^*(X) \oplus H^*(Z)^{\oplus (r-1)}, \quad \ttau \mapsto (\tau(\ttau), \{\varsigma_j(\ttau)\}_{j=0}^{r-2}). 
\end{equation} 
These maps $\Psi$, $\tau(\ttau)$ and $\varsigma_j(\ttau)$ are defined over $\C$ in \cite{Iritani:monoidal}. In these notes we show the following: 
\begin{mainresult}[see Propositions \ref{prop:cyclotomic}, \ref{prop:Hodge-equivariance} and Corollary \ref{cor:Hodge-fixed} for precise statements]   
The maps $\tau(\ttau)$, $\varsigma_j(\ttau)$, and $\Psi$ are essentially defined over a cyclotomic field. Furthermore, the change-of-variables map \eqref{eq:change_of_variables} restricts to a formal isomorphism between the subspaces of complexified Hodge classes. When $\ttau$ is a complexified Hodge class, the isomorphism $\Psi|_\ttau$ restricts to an isomorphism between these Hodge subspaces. 
\end{mainresult} 


\begin{remark} 
We summarize the notation from \cite{Iritani:monoidal} necessary to state the results in these notes. In the proofs of these results, however, we shall freely adopt the notation and conventions of \cite{Iritani:monoidal}. 
\end{remark} 

\subsection*{Acknowledgements}
I thank Sergey Galkin for crucial clarifications, and Alessio Corti, J\'er\'emy Gu\'{e}r\'{e}, Yank{\i} Lekili, and Emanuele Macr\`i for their questions, which motivated me to write these notes.

\section{Recollection of notation from \cite{Iritani:monoidal}}
We recall notation necessary to state the results. Every module in these notes is $\Z$-graded and its completion will always be considered in the graded sense. For a $\Z$-graded module $N = \bigoplus_k N^k$ and a descending chain $\{I_n\}_{n\ge 1}$ of homogeneous submodules $I_n = \bigoplus_k I_n^k\subset N$, the graded completion of $N$ with respect to $\{I_n\}$ is defined to be $\widehat{N} = \bigoplus \widehat{N}^k$, where $\widehat{N}^k = \varprojlim_n N^k/I_n^k$. 

%
Let $\NEN(X) \subset H_2(X,\Z)$ denote the monoid of effective curve classes in $X$. We denote by $Q^d\in \C[\NEN(X)]$ the element corresponding to $d\in \NEN(X)$ in the monoid ring, and define its degree by $\deg Q^d = 2 c_1(X) \cdot d$. The variable $Q$ is called the \emph{Novikov variable}. Let $\omega\in H^2(X,\R)$ be an ample class. The \emph{Novikov ring} $\C[\![Q]\!]$ is defined to be the graded completion of $\C[\NEN(X)]$ with respect to the descending chain $I_n = \langle Q^d : \omega \cdot d\ge n\rangle_\C \subset \C[\NEN(X)]$ of submodules. The completion does not depend on the choice of $\omega$. Similarly, for any $\Z$-graded ring $K$, we define $K[\![Q]\!]$ to be the graded completion of $K[\NEN(X)]$. 

For a variable $x$ of degree $\deg x \in \Z$ and a $\Z$-graded ring $K$, we define the ring $K(\!(x)\!)$ of formal Laurent series to be the graded completion of $K[x,x^{-1}]$ with respect to the descending chain $I_n = x^n K[x]$ of submodules. 

For $Y= X$, $\tX$, or $Z$, let $H^*(Y) = H^*(Y;\C)$ denote the cohomology group with complex coefficients and let $\{\phi_{Y,i}\}_i$ denote a homogeneous basis of $H^*(Y;\Q)$ over $\Q$. Let $\tau$, $\ttau$ and $\sigma$ denote the parameters of the quantum cohomology of $X$, $\tX$ and $Z$ respectively; they take values in $H^*(X)$, $H^*(\tX)$ and $H^*(Z)$. We expand these parameters as $\tau = \sum_i \tau^i \phi_{X,i}$, $\ttau = \sum_i \ttau^i \phi_{\tX,i}$ and $\sigma = \sum_i \sigma^i \phi_{Z,i}$. The variables $\tau^i$, $\ttau^i$, $\sigma^i$ are assigned the degrees 
\[
\deg \tau^i = 2- \deg \phi_{X,i}, \quad \deg \ttau^i = 2 - \deg \phi_{\tX,i}, \quad \deg \sigma^i = 2 - \deg \phi_{Z,i}.  
\]
They are also assigned the parities $|\tau^i|, |\ttau^i|, |\sigma^i|\in \Z/2\Z$ that are congruent modulo 2 to their degrees. We require that they are supercommutative with respect to their parities, e.g.~$\tau^i \tau^j = (-1)^{|\tau^i| |\tau^j|} \tau^j \tau^i$. 

For a $\Z$-graded ring $K$, $K[\![\ttau]\!]$ (or $K[\![\tau]\!]$, $K[\![\sigma]\!]$) denotes the graded completion of the polynomial ring $K[\ttau]=K[\{\ttau^i\}]$ (resp.~$K[\tau]$, $K[\sigma]$) with respect to the polynomial degree. 
Due to the supercommutativity, we have 
\[
K[\![\ttau]\!] = K[\![\ttau^i: \text{even}]\!] \otimes_K \bigwedge^\bullet_K \left(\bigoplus_{\ttau^i: \text{odd}} K\ttau^i \right). 
\]
We also use the shorthand notation such as $K[\![Q,\ttau]\!] := K[\![Q]\!][\![\ttau]\!]$ (and similarly for $K[\![Q,\tau]\!]$ and $K[\![Q,\sigma]\!]$). 

Let $\varphi \colon \tX \to X$ denote the blowup morphism along $Z \subset X$ and let $D\subset \tX$ be the exceptional locus. We have $D\cong \PP(\cN_{Z/X})$. These spaces fit into the following commutative diagram:  
\begin{equation} 
\label{eq:diag} 
\begin{aligned} 
\xymatrix{
D \ar[d]_{\pi} \ar@{^{(}->}[r]^{\jmath} & \tX \ar[d]^{\varphi} \\ 
Z \ar@{^{(}->}[r]^{\imath} & X  
}
\end{aligned} 
\end{equation} 
where $\imath, \jmath$ are the natural inclusions and $\pi$ is the natural projection. We denote by $p\in H^2(D;\Z)$ the relative hyperplane class $c_1(\cO_D(1))$ over $D = \PP(\cN_{Z/X})$. Given bases $\{\phi_{X,i}\}$ and $\{\phi_{Z,k}\}$ of $H^*(X;\Q)$ and $H^*(Z;\Q)$, respectively, the set $\{\varphi^*\phi_{X,i}, \jmath_*(p^l\pi^*\phi_{Z,k})\}_{i,k,0\le l\le r-2}$ forms a basis of $H^*(\tX;\Q)$. 

Let $z$ be a variable of degree two and let $\frq$ denote the Novikov variable associated with the class of a line contracted under the map $\tX \to X$. We have $\deg \frq = 2(r-1)$. We also set 
\[
\frs = \begin{cases} 
r-1 & \text{if $r$ is even;}\\ 
2(r-1) & \text{if $r$ is odd.} 
\end{cases}
\]
The quantum $D$-modules of $X$, $\tX$ and $Z$ are originally defined over the respective Novikov rings. Let $Q$, $\tQ$, $Q_Z$ denote the Novikov variables for $X$, $\tX$ and $Z$ respectively. In \cite[(1.1)]{Iritani:monoidal}, we embed the Novikov rings of $X$, $\tX$ and $Z$ into the common ring $\C(\!(\frq^{-1/\frs})\!)[\![Q]\!]$ as follows: 
\begin{equation}
\label{eq:embed} 
\begin{aligned}
\C[\![Q]\!] & \hookrightarrow \C(\!(\frq^{-1/\frs})\!)[\![Q]\!] \qquad && \text{in an obvious way} \\ 
\C[\![\tQ]\!] & \hookrightarrow \C(\!(\frq^{-1/\frs})\!)[\![Q]\!] && \tQ^\td \mapsto Q^{\varphi_*\td} \frq^{-D\cdot \td} \\ 
\C[\![Q_Z]\!] & \to \C(\!(\frq^{-1/\frs})\!)[\![Q]\!] && Q_Z^d \mapsto Q^{\imath_*d}\frq^{-\rho_Z\cdot d/(r-1)}
\end{aligned}
\end{equation}  
where $\rho_Z = c_1(\cN_{Z/X})$, and defined the following Laurent forms via the base change to this common ring (see \cite[(5.37)]{Iritani:monoidal}):  
\begin{align*}
\QDM(X)^{\rm la} & := H^*(X)\otimes \C[z](\!(\frq^{-1/\frs})\!)[\![Q,\tau]\!] \\
\QDM(\tX)^{\rm la} & := H^*(\tX) \otimes \C[z](\!(\frq^{-1/\frs})\!)[\![Q,\ttau]\!] \\ 
\QDM(Z)^{\rm la} & := H^*(Z) \otimes \C[z](\!(\frq^{-1/\frs})\!)[\![Q,\sigma]\!].  
\end{align*} 
The pull-backs $\tau^*\QDM(X)^{\rm la}$, $\varsigma_j^* \QDM(Z)^{\rm la}$ are defined as modules as: 
\begin{align*} 
\tau^*\QDM(X)^{\rm la} & := H^*(X)\otimes \C[z](\!(\frq^{-1/\frs})\!)[\![Q,\ttau]\!]\\ 
\varsigma_j^* \QDM(Z)^{\rm la} & := H^*(Z)\otimes\C[z](\!(\frq^{-1/\frs})\!)[\![Q,\ttau]\!] 
\end{align*} 
They are equipped with the appropriate pull-back connections (see \cite[(5.41)--(5.43)]{Iritani:monoidal}). 

\begin{remark} 
All variables, as well as cohomology classes, carry both a $\Z$-degree and a $\Z/2\Z$-parity. Except for the parameters $\tau^i$, $\ttau^i$ and $\sigma^i$ associated with odd cohomology classes, all other variables ($Q$, $z$, $\frq^{1/\frs}$, etc.) have even parity. Note that the parity is not always congruent modulo $2$ to the degree: we have $\deg \frq^{1/\frs} =1$ when $r$ is odd, but the parity of $\frq^{1/\frs}$ remains even. 
\end{remark} 

\section{Arithmetic properties}
\label{sec:arithmetic} 
First we recall several properties of $\tau(\ttau)$, $\varsigma_j(\ttau)$ and $\Psi$ established in \cite{Iritani:monoidal}. 
The formal changes of variables $\tau= \tau(\ttau)$, $\sigma = \varsigma_j(\ttau)$ from \eqref{eq:change_of_variables} are expanded in the form: 
\[
\tau(\ttau) = \sum_i \tau^i(\ttau) \phi_{X,i}, \quad \varsigma_j(\ttau) = \sum_i \varsigma_j^i(\ttau) \phi_{Z,i}  
\]
with $\tau^i(\tau) \in \C(\!(\frq^{-1})\!)[\![Q,\ttau]\!]$ and $\varsigma_j^i(\ttau) \in \C(\!(\frq^{-\frac{1}{r-1}})\!)[\![Q,\ttau]\!]$. Note the differing exponents of $\frq$; note also that $\C(\!(\frq^{-1})\!)=\C[\frq^\pm]$, $\C(\!(\frq^{-\frac{1}{r-1}})\!)=\C[\frq^{\pm\frac{1}{r-1}}]$ by the convention on completion. 
They satisfy the following properties (see \cite[Theorem 5.18]{Iritani:monoidal}): 
\begin{enumerate}[label=(\alph*)]
\item \label{item:homogeneity_coordinatechange}
The formal power series $\tau^i(\ttau)$, $\varsigma_j^i(\ttau)$ have the same degrees and the parities as the variables $\tau^i$, $\sigma^i$. 
\item \label{item:asymp_coordinatechange} 
We have 
\begin{align*} 
\tau(\ttau)|_{Q=\ttau=0} & = \frq^{-1}[Z] + O(\frq^{-2}) \in H^*(X)\otimes \C[\frq^{-1}]\\
\varsigma_j(\ttau)|_{Q=\ttau=0} & = -(r-1) \lambda_j + h_{Z,j} + O(\frq^{-\frac{1}{r-1}}) \in H^*(Z)\otimes \C[\frq^{\pm \frac{1}{r-1}}] 
\end{align*} 
where $\lambda_j = -e^{-\frac{2\pi\iu}{r-1}j} (e^{-\pi\iu}\frq)^{\frac{1}{r-1}}$ and $h_{Z,j} = \frac{2\pi\iu}{r-1} (j+\frac{1}{2}) \rho_Z$ with $\rho_Z = c_1(\cN_{Z/X})$. 
\item The Jacobian matrix of the map \eqref{eq:change_of_variables} is invertible over $\C(\!(\frq^{-\frac{1}{r-1}})\!)[\![Q,\ttau]\!]$; hence, \eqref{eq:change_of_variables} defines a formal isomorphism at $\ttau=0$. 
\end{enumerate} 
The statement about the parity in Part \ref{item:homogeneity_coordinatechange} was omitted in \cite{Iritani:monoidal}, but it is obvious from the construction.  

The decomposition isomorphism $\Psi$ from \eqref{eq:Psi} is an invertible element 
\[
\Psi \in \Hom(H^*(\tX), H^*(X)\oplus H^*(Z)^{\oplus (r-1)}) \otimes \C[z](\!(\frq^{-1/\frs})\!)[\![Q,\ttau]\!].  
\]
We write $\Psi = (\Psi_X, \Psi_{Z,0},\dots,\Psi_{Z,{r-2}})$, where $\Psi_X$ is a $\Hom(H^*(\tX),H^*(X))$-valued formal power series and each $\Psi_{Z,j}$ is a $\Hom(H^*(\tX),H^*(Z))$-valued formal power series. We have the following \cite[Theorem 5.18]{Iritani:monoidal}: 
\begin{enumerate}[label=(\alph*), start=4] 
\item \label{item:homogeneity_Psi} 
The endomorphism $\Psi_X$ is homogeneous of degree zero and each $\Psi_{Z,j}$ is homogeneous of degree $-r$. They also preserve the parity. 

\item \label{item:asymp_Psi} 
Recall the basis $\{\varphi^*\phi_{X,i}, \jmath_*(p^l\pi^*\phi_{Z,k})\}_{i,k,0\le l\le r-2}$ of $H^*(\tX;\Q)$ introduced following \eqref{eq:diag}. The following asymptotics hold: 
\begin{align*} 
\Psi_X (\varphi^* \phi_{X,i})|_{Q=\ttau=0} & = \phi_{X,i} + O(\frq^{-1}) \\  
\Psi_X( \jmath_*(p^l \pi^*\phi_{Z,k}))|_{Q=\ttau=0} & = O(\frq^{-1}) \\
\Psi_{Z,j}(\varphi^*\phi_{X,i})|_{Q=\ttau=0} & = q_{Z,j}(\phi_{X,i}|_Z + O(\frq^{-\frac{1}{r-1}})) \\
\Psi_{Z,j}(\jmath_*(p^l\pi^*\phi_{Z,k}))|_{Q=\ttau=0} & = q_{Z,j} (-1)^l \lambda_j^{l+1} (\phi_{Z,k} + O(\frq^{-\frac{1}{r-1}})) 
\end{align*} 
where $q_{Z,j} = \frac{1}{\iu \sqrt{r-1}} e^{\frac{\pi \iu r}{r-1}j} (e^{-\pi\iu} \frq)^{-\frac{r}{2(r-1)}}$. 
\end{enumerate} 
The statement about parity in Part \ref{item:homogeneity_Psi} was again omitted in \cite{Iritani:monoidal}, but it is obvious from the construction. 

\begin{proposition} 
\label{prop:cyclotomic} 
We set $\frt :=e^{-\pi\iu}\frq = -\frq$. Recall the quantities $h_{Z,j}$ from \ref{item:asymp_coordinatechange} and $q_{Z,j}$ from \ref{item:asymp_Psi}. We have 
\begin{align*} 
\tau(\ttau) & \in H^*(X;\Q) \otimes \Q(\!(\frt^{-1})\!)[\![Q,\ttau]\!] \\
\varsigma_0(\ttau) & \in  h_{Z,0} + H^*(Z;\Q) \otimes \Q(\!(\frt^{-\frac{1}{r-1}})\!)[\![Q,\ttau]\!] \\
\Psi_X & \in \Hom(H^*(\tX;\Q), H^*(X;\Q))\otimes \Q[z](\!(\frt^{-1})\!)[\![Q,\ttau]\!] \\ 
\Psi_{Z,0} & \in q_{Z,0} \Hom(H^*(\tX;\Q),H^*(Z;\Q))\otimes \Q[z](\!(\frt^{-\frac{1}{r-1}})\!)[\![Q,\ttau]\!] 
\end{align*} 
Moreover, $\varsigma_j(\ttau)$ and $\Phi_{Z,j}$ are obtained respectively from $\varsigma_0(\ttau)$ and $\Phi_{Z,0}$ by the monodromy transformation with respect to the path $\theta \mapsto e^{-2\pi \iu\theta} \frt$, $\theta \in [0,j]$: 
\begin{align*} 
\varsigma_j(\ttau)- h_{Z,j} & = \left.(\varsigma_0(\ttau)-h_{Z,0}) \right|_{\frt \to e^{-2\pi\iu j} \frt} \\ 
\Psi_{Z,j} & = \left.\Psi_{Z,0}\right|_{\frt \to e^{-2\pi\iu j}\frt} 
\end{align*} 
In particular, when written in the original coordinate $\frq$, the components of $q_{Z,j}^{-1}\Psi_{Z,j}$ and $\varsigma_j(\ttau) - h_{Z,j}$ are formal power series in $\frq$, $Q$, $\ttau$, $z$ with coefficients in the cyclotomic field $\Q(e^{\frac{\pi\iu}{r-1}}) = \Q(e^{2\pi\iu/\frs})$. 
\end{proposition}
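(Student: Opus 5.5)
We will retrace the construction of $\tau(\ttau)$, $\varsigma_j(\ttau)$ and $\Psi$ in \cite{Iritani:monoidal} and record the field of definition at each step. Recall the shape of that construction. Via the Fourier transformation of equivariant quantum $D$-modules, the quantum $D$-module of $\tX$ is related to the $\C^\times$-equivariant quantum $D$-module of the master space. One then restricts to the fixed loci, whose components are $X$ and copies of $Z$ (through the normal bundle $\cN_{Z/X}$). Finally one performs a stationary phase (asymptotic) expansion in $\frq$ around each critical point $\lambda_j$ of the relevant discrete Fourier transform.

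The first step is to check that everything before the stationary phase step is defined over $\Q$ once it is written in $\frt=-\frq$. The Fourier transform, the equivariant $J$-functions and the quantum Riemann--Roch/Lefschetz twists by $\cN_{Z/X}$ are built from Gromov--Witten invariants, which are rational, and from Gamma-type factors. The sign $e^{-\pi\iu}$ is exactly what converts $(-1)^{(r-1)k}$-type factors coming from the discrete Fourier transform into integral powers of $\frt$. In this step we must confirm that no $\pi\iu$ survives apart from the monodromy-dependent shift $h_{Z,j}$ and the normalization $q_{Z,j}$.

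The second step treats the $X$-component. Here the expansion is around $\lambda=0$ in $\frq^{-1}$, the critical point being the $X$-branch. The resulting $\tau(\ttau)$ and $\Psi_X$ are obtained by algebraic operations: solving recursive equations order by order in $\frt^{-1}$, $Q$ and $\ttau$, namely a Birkhoff factorization and a mirror-map inversion. These operations preserve $\Q$-rationality, so $\tau(\ttau)$ and $\Psi_X$ lie over $\Q(\!(\frt^{-1})\!)$.

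The third step treats the $Z$-components. The critical point $\lambda_j$ is a branch of $\frt^{1/(r-1)}$. After substituting $\lambda=\lambda_j(1+\text{correction})$, the saddle point expansion produces series in $\frt^{-1/(r-1)}$. The rational-coefficient part of these series depends on $j$ only through the choice of branch $\frt^{1/(r-1)}\mapsto e^{-2\pi\iu j/(r-1)}\frt^{1/(r-1)}$. The exceptions are the logarithmic term, which gives $h_{Z,j}$ through $\log\lambda_j$ multiplied by $\rho_Z$, and the Gaussian normalization, which gives $q_{Z,j}$ (the $\sqrt{r-1}$ and $\iu$ factors). Both are factored out explicitly.

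The main obstacle is this third step. We must show that the stationary phase coefficients, after factoring out $h_{Z,j}$ and $q_{Z,j}$, are rational functions of $\lambda_j$ with $\Q$-coefficients. This means tracking the Stirling-type asymptotics of the Gamma factors, in which $\log 2\pi$ and Bernoulli numbers appear; the $2\pi$ contributions must be absorbed into $q_{Z,j}$. We will try to handle this by working at the level of the formal asymptotic expansion as a solution of a differential equation with rational coefficients. Such a solution is determined uniquely by its leading term, so its coefficients lie in the field generated by the data of that leading term. Combining this uniqueness with the leading asymptotics in \ref{item:asymp_coordinatechange} and \ref{item:asymp_Psi}, which after normalization are rational in $\lambda_0$, we obtain rationality for $j=0$.

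The monodromy formula then follows. The $j$-th solution is the analytic continuation of the $0$-th solution along $\frt\mapsto e^{-2\pi\iu\theta}\frt$, because this continuation permutes the critical branches $\lambda_j$. The shifts in $h_{Z,j}$ and $q_{Z,j}$ account for the continuation of $\log\lambda$ and of $\lambda^{-r/2}$.

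Finally, rewriting $\frt^{1/(r-1)}=e^{-\pi\iu/(r-1)}\frq^{1/(r-1)}$ puts the coefficients in $\Q(e^{\pi\iu/(r-1)})$, which proves the cyclotomic claim.
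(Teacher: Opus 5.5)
Your Steps 1--2 and the monodromy statement agree with the paper in substance. $\FT_\tX$, $\FT_X$ and the maps $\tbtau,\btau$ come from a discrete Fourier transform defined over $\Q$ that uses only integer powers of $\frq=-\frt$, so no sign conversion is needed there. Likewise, $\scrF_{Z,j}$ is by definition $\scrF_{Z,0}$ with $\frt\to e^{-2\pi\iu j}\frt$.

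The gap is Step 3, which is the real content of the proposition. You identify correctly what must be shown, but you only sketch a strategy (``we will try''), and that strategy is not justified. The principle that a formal solution of a rational-coefficient equation is fixed by its leading term, with coefficients in the field of that term, does not apply here as stated:
(i) the unknown $\varsigma_j(\ttau)$ enters the target connection $\varsigma_j^*\nabla^Z$, so you are not solving a fixed linear equation;
(ii) all $\dim H^*(Z)$ directions of the $j$-th $Z$-block share the same exponential behaviour governed by $\lambda_j$, so you would have to rule out flat automorphisms of $\varsigma_j^*\QDM(Z)^{\rm la}$ of the form $\id+O(\frq^{-1/(r-1)})$, and nothing in (b), (e) does this;
(iii) you never explain why the $Z$-side equation is defined over $\Q(\!(\frt^{-1/(r-1)})\!)$ at all, given that it involves $\star^Z_{\varsigma_j}$ with $h_{Z,j}\in\pi\iu H^2(Z;\Q)$ and the base change $Q_Z\to Q\frq^{-\rho_Z/(r-1)}$.

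The uniqueness result actually available is the Birkhoff-factorization reconstruction of \cite[Section 5.8]{Iritani:monoidal}. It takes as input the full restrictions $\tau^\circ,\varsigma_j^\circ,\Psi^\circ$ to $Q=\ttau=0$. These are complete series in $\frq^{-1/\frs}$ computed from $\scrF_{Z,j}$, so their rationality is exactly what must be proved. Also, Stirling constants such as $\log 2\pi$ could not be ``absorbed into $q_{Z,j}$'', since $q_{Z,j}$ contains no such constant.

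The missing idea is that no asymptotic analysis is needed, because \cite{Iritani:monoidal} defines $\scrF_{Z,0}$ by an explicit formal stationary-phase formula. It reads $\scrF_{Z,0}(\bbf)=\sqrt{\lambda_0/c_Z}\,(-\lambda_0)^{-\rho_Z/z-r/2}\lambda_0^{-1/2}\bigl[e^{(z\partial_u)^2}e^{-g(u,\lambda_0)/z}e^{u/\sqrt{c_Z\lambda_0}}\Phi(u)\bigr]_{u=0}$, with $\lambda_0=-\frt^{1/(r-1)}$ and $c_Z=-(r-1)$. The prefactor is exactly $q_{Z,0}\frt^{-\rho_Z/((r-1)z)}$. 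After the substitution $v=u/\sqrt{c_Z\lambda_0}$, the bracket is visibly a Laurent series in $z$ and $\frt^{-1/(r-1)}$ with rational coefficients. The factors $\tDelta_\alpha$ involve only rational constants and the algebraic classes $\ch_i(\cN_{Z/W,\alpha})$, evaluated at $\lambda=\lambda_0e^{v}$.

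The second ingredient is the divisor equation $M_Z(\sigma+h;Q_Z)=e^{h/z}M_Z(\sigma;Q_Ze^h)$. It rewrites $\frq^{-\rho_Z/((r-1)z)}M_Z(h_{Z,j}+\bar\varsigma_j;Q_Z\frq^{-\rho_Z/(r-1)})$ as $\frt_j^{-\rho_Z/((r-1)z)}M_Z(\bar\varsigma_j;Q_Z\frt_j^{-\rho_Z/(r-1)})$ with $\frt_j=e^{-2\pi\iu j}\frt$. This is precisely how $h_{Z,j}$ drops out. Combine these with the fact that $(\tbtau|_\sfH)^{-1}$ is rational in integer powers of $\frq$. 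That gives the claims for $\varsigma_0-h_{Z,0}$ and $q_{Z,0}^{-1}\Psi_{Z,0}$, and also the monodromy formulas.
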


\begin{proof} 
Recall from \cite[Sections 5]{Iritani:monoidal} that the isomorphism $\Psi$ is induced from the map $\bPsi$ that is determined by the following commutative diagram: 
\[
\xymatrix{
& \ar[ld]_{\FT_\tX\sphat}^{\cong} \QDM_T(W)_\tX\sphat \ar[rd]^{\FT_X\sphat\oplus\bigoplus_{j=0}^{r-2} \FT_{Z,j}\sphat} & \\
\tbtau^*\QDM(\tX)^{\rm ext} \ar[rr]^-{\bPsi} & & \btau^*\QDM(X)^{\rm La} \oplus \bigoplus_{j=0}^{r-2} \bvarsigma_j^*\QDM(Z)^{\rm La}
}
\]
where $\tbtau\colon H^*_T(W) \to H^*(\tX)$, $\btau \colon H^*_T(W) \to H^*(X)$, $\bvarsigma_j \colon H^*_T(W) \to H^*(Z)$ are formal maps\footnote{These are denoted by the non-bold symbols $\ttau,\tau,\varsigma_j$ in \cite[Theorem 5.9]{Iritani:monoidal}; we use bold symbols here to avoid confusion.}. The map $\tbtau$ defines a formal isomorphism when restricted to a finite-dimensional subspace $\sfH \subset H^*_T(W)$. The maps $\tau$ and $\varsigma_j$ in the present notes are given as the compositions $\btau \circ (\tbtau|_\sfH)^{-1}$ and $\bvarsigma_j\circ (\tbtau|_{\sfH})^{-1}$ respectively. The map $\Psi$ is obtained by restricting $\bPsi$ to the subspace $\sfH$. Note that $\bPsi$ appears in \cite[Theorem 5.9]{Iritani:monoidal}, and the reduction to the finite-dimensional base is discussed in \cite[Section 5.7]{Iritani:monoidal}. 

It is evident from the construction that the isomorphism $\FT\sphat_\tX$ and the change of variables $\tbtau \colon H^*_T(W) \to H^*(\tX)$ are defined over $\Q$. In fact, these data are determined by the discrete Fourier transform $\sfF_\tX(J_W(\theta))$ (via \cite[Corollary 4.11, Proposition 5.1]{Iritani:monoidal}), which is itself defined over $\Q$. Similarly, the Fourier transformation $\FT_X\sphat$ and the change of variables $\btau \colon H^*_T(W) \to H^*(X)$ are defined over $\Q$. Note also that the maps $\FT\sphat_\tX$, $\FT\sphat_X$, $\tbtau$, $\btau$ only involve integer powers of $\frq = -\frt$. 

It follows that the map $\Psi_X$ and the change of variables $\tau = \btau \circ (\tbtau|_\sfH)^{-1} \colon H^*(\tX) \to H^*(X)$ are defined over $\Q$. Thus it suffices to study the Fourier transformations $\FT\sphat_{Z,j}$ and the associated coordinate changes $\bvarsigma_j \colon H^*_T(W) \to H^*(Z)$. These are determined by the following formula (see \cite[Corollary 4.9, Proposition 5.7, (5.21)]{Iritani:monoidal}): 
\[
\scrF_{Z,j}(M_W(\theta) \phi) = \left.\frq^{-\frac{\rho_Z}{(r-1)z}} M_Z(h_{Z,j}+\ovbvarsigma_j(\theta); Q_Z \frq^{-\frac{\rho_Z}{r-1}}) \right|_{Q_Z \to Q} \FT_{Z,j}\sphat(\phi). 
\]
where we write $\bvarsigma_j(\theta) =- (r-1)\lambda_j + h_{Z,j} + \ovbvarsigma_j(\theta)$ and the subscript $Q_Z \to Q$ denotes the replacement of $Q_Z^d$ with $Q^{\imath_*d}$ for $d\in \NEN(Z)$ (where $\imath \colon Z \to X$ is the inclusion). Using the Divisor Equation $M_Z(\tau+h;Q_Z) = e^{h/z}M_Z(\tau,Q_Z e^h)$, which holds for $h\in H^2(Z)$, we can rewrite this as 
\[
\scrF_{Z,j}(M_W(\theta)\phi) = \left. \frt_j^{-\frac{\rho_Z}{(r-1)z}} M_Z(\ovbvarsigma_j(\theta); Q_Z \frt_j^{-\frac{\rho_Z}{r-1}})\right|_{Q_Z \to Q} \FT_{Z,j} \sphat(\phi) 
\]
where $\frt_j := e^{-2\pi \iu j} \frt = e^{-2\pi\iu (j+\frac{1}{2})}\frq$. The statement about $\varsigma_j = \bvarsigma_j \circ (\ttau|_\sfH)^{-1}$ and $\Psi_{Z,j}$ follows from the Lemma \ref{lem:cont_FT} below. 
\end{proof} 

\begin{lemma} 
\label{lem:cont_FT} 
For $\bbf \in H^*_T(W;\Q)$, we have 
\begin{align*} 
\scrF_{Z,0}(\bbf) & \in q_{Z,0} \frt^{-\frac{\rho_Z}{(r-1)z}} H^*(Z;\Q)\otimes \Q[z,z^{-1}](\!(\frt^{-\frac{1}{r-1}})\!) \\ 
\scrF_{Z,j}(\bbf) & = \left. \scrF_{Z,0}(\bbf)\right|_{\frt\to e^{-2\pi\iu j}\frt}. 
\end{align*} 
\end{lemma}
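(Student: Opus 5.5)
We will work directly with the explicit formula for the discrete Fourier transformation $\scrF_{Z,j}$ from \cite[Section 4]{Iritani:monoidal}. It is defined by stationary phase (formal asymptotic) analysis of a sum over the $\Z$-lattice of Seidel shifts, localized at the critical point corresponding to $\lambda_j$. Concretely, for $\bbf \in H^*_T(W;\Q)$, we restrict $\bbf$ to the $T$-fixed component over $Z$ and obtain an $H^*(Z)$-valued function of the equivariant parameter $\lambda$. We then form the Gamma-factor integrand
\[
e^{\lambda\log\frq/z}\,\Gamma\text{-factors}(\lambda,z)\cdot \bbf|_Z(\lambda),
\]
and take its formal asymptotic expansion around $\lambda=\lambda_j$, which is a critical point of the phase $\lambda\log\frq -(r-1)(\lambda\log(\lambda)-\lambda)+\cdots$, i.e.\ $\lambda_j^{r-1}$ is essentially $\frq$ up to sign. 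The plan is to show that every ingredient of this expansion depends on $j$ only through the choice of the branch $\lambda_j=-\frt_j^{1/(r-1)}$, where $\frt_j=e^{-2\pi\iu j}\frt$. Concretely, $(e^{-\pi\iu}\frq)^{1/(r-1)}e^{-2\pi\iu j/(r-1)}=\frt_j^{1/(r-1)}$ when $\frt_j^{1/(r-1)}$ is defined by continuation along $\theta\mapsto e^{-2\pi\iu\theta}\frt$. The second identity then follows formally once the first is written as an expression in $\frt^{1/(r-1)}$ and $\log\frt$.

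First, we rewrite the phase and Gamma asymptotics (Stirling's expansion of $\Gamma$ via $\log\Gamma$, whose coefficients are Bernoulli numbers, hence rational) in terms of $\frt$. The logarithms of $\frq$ that appear should combine into $\log\frt_j$ with the shift $2\pi\iu(j+\frac12)$. These shifts produce exactly the classes $h_{Z,j}$, which the proof of Proposition \ref{prop:cyclotomic} has already absorbed via the divisor equation into $\frt_j^{-\rho_Z/((r-1)z)}$. Second, we evaluate the Gaussian integral at the critical point. The second derivative of the phase is $-(r-1)/\lambda_j$ (over $z$). Together with the prefactors $\lambda_j^{-r/2}$-type terms coming from Stirling and the factor $\sqrt{2\pi z}$ normalizations, this yields the scalar $q_{Z,j}$. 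We check that $q_{Z,j}=\frac{1}{\iu\sqrt{r-1}}\frt_j^{-\frac{r}{2(r-1)}}$ with the continued branch, which is consistent since $e^{\frac{\pi\iu r}{r-1}j}=(e^{-2\pi\iu j})^{-\frac{r}{2(r-1)}}$. Third, we expand the remaining perturbative series. Substituting $\lambda=\lambda_j+\mu$ and expanding in $\mu$ and $z$, the Gaussian moments are rational multiples of powers of $z\lambda_j$. All Taylor coefficients of the rational-coefficient functions ($\bbf|_Z$ is a polynomial in $\lambda$ over $H^*(Z;\Q)$, and the Stirling remainders have rational coefficients) lie in $\Q[\lambda_j^{\pm1}]$. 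Hence the corrected series lies in $H^*(Z;\Q)\otimes\Q[z,z^{-1}](\!(\frt^{-1/(r-1)})\!)$ once we write $\lambda_j=-\frt_j^{1/(r-1)}$, and the negative powers come from the expansion parameter $z/\lambda_j$.

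The main obstacle is the bookkeeping of the non-rational constants. We must make sure that every factor of $\pi$, $\iu$, $\sqrt{2\pi}$, and $\log(-1)$ either cancels (the $\sqrt{2\pi}$'s from Stirling against those from the Gaussian integral and from the normalization of $\scrF$) or is absorbed into $q_{Z,j}$ and $\frt_j^{-\rho_Z/((r-1)z)}$. We would first do this check at leading order, comparing with the asymptotics in \ref{item:asymp_Psi}, which already exhibit $q_{Z,j}$, and then observe that higher-order corrections only involve rational Stirling coefficients and polynomials in $\lambda_j$. Uniformity in $j$ is then automatic: the entire expression is a single formal function of the branch $\frt_j^{1/(r-1)}$.
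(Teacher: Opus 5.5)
Your proposal is correct and follows essentially the paper's route. You isolate the stationary-phase prefactor as $q_{Z,0}\frt^{-\rho_Z/((r-1)z)}$, observe that the remaining series involves only rational data (Bernoulli-number corrections in $\tDelta_\alpha$, Taylor coefficients of the phase, $i_Z^*\bbf$) in integer powers of $\lambda_0=-\frt^{1/(r-1)}$ and $z$, and get the $j$-th identity from the branch choice $\lambda_j$. The paper avoids your constant bookkeeping by quoting the already-evaluated formula after (4.9) of \cite{Iritani:monoidal}, where the prefactor $\sqrt{\lambda_0/c_Z}\,(-\lambda_0)^{-\rho_Z/z-r/2}\lambda_0^{-1/2}$ is explicit, and it uses the multiplicative rescaling $v=u/\sqrt{c_Z\lambda_0}$ (so $\lambda=\lambda_0e^{v}$) instead of your additive shift $\mu=\lambda-\lambda_j$.
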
 
\begin{proof} 
This follows from the definition of $\scrF_{Z,j}$ in \cite[Section 4.2.3]{Iritani:monoidal}. It suffices to prove the first formula, as the second holds by definition. The equation following \cite[(4.9)]{Iritani:monoidal} applied to $F=Z$ yields: 
\begin{align}
\nonumber  
\scrF_{Z,0}(\bbf) & = \sqrt{\frac{\lambda_0}{c_Z}} (-\lambda_0)^{-\rho_Z/z - r/2} \lambda_0^{-1/2} 
\left[ e^{(z\partial_u)^2}e^{-g(u,\lambda_0)/z} e^{u/\sqrt{c_Z\lambda_0}} \Phi(u)\right]_{u=0} \\ 
\label{eq:scrF0}
& =q_{Z,0} \frt^{-\frac{\rho_Z}{(r-1)z}}
\left[ e^{(z\partial_u)^2}e^{-g(u,\lambda_0)/z} e^{u/\sqrt{c_Z\lambda_0}} \Phi(u)\right]_{u=0} 
\end{align} 
where $\lambda_0 = - \frt^{\frac{1}{r-1}}$, $c_Z = -(r-1)$ and 
\begin{align*} 
g(u,\lambda_0) & = c_Z\lambda_0 \sum_{n=3}^\infty \frac{n-1}{n!} \left(\frac{u}{\sqrt{c_Z\lambda_0}}\right)^n, \\
\Phi(u) & =  e^{-\frac{u}{\sqrt{c_Z\lambda_0}}(\rho_F/z + r_F/2)} \left(\prod_\alpha \tDelta_\alpha^{-1}\right) i_F^* \bbf. 
\end{align*} 
Here $\tDelta_\alpha$ depends on $u$ through the identification $\alpha = w_\alpha \lambda$ and $\lambda = \lambda_0 e^{u/\sqrt{c_Z \lambda_0}}$. By the change of coordinate $v = u/\sqrt{c_Z\lambda_0}$, we see that the quantity in brackets $[\cdots]_{u=0}$ in \eqref{eq:scrF0} is a formal Laurent series in $(z,\frt^{-\frac{1}{r-1}})$ with rational coefficients. 
\end{proof} 

\begin{remark} 
Recall that we considered the base change of the quantum $D$-module of $Z$ via the ring homomorphism \eqref{eq:embed}. 
With this understood, the constant term $h_{Z,j} =\frac{2 \pi \iu}{r-1} (j+\frac{1}{2}) \rho_Z$ for $\varsigma_j(\ttau)$ effectively rescales the Novikov variable $\frq$ by $e^{-2\pi\iu (j+\frac{1}{2})}$, transforming $\frq^{\frac{1}{r-1}}$ into $\frt_j^{\frac{1}{r-1}}=e^{-\frac{2\pi\iu}{r-1} j}\frt^{\frac{1}{r-1}}$. That is, the quantum product $\star^Z_\sigma$ of $Z$ satisfies 
\[
\left. \phantom{*^Z_{h_{Z,j}}}\star^Z_{\varsigma_j(\ttau)}\right|_{Q_Z \to Q \frq^{-\rho_Z/(r-1)}} =\left. \star^Z_{\varsigma_j(\ttau)-h_{Z,j}}\right|_{Q_Z \to Q \frt_j^{-\rho_Z/(r-1)}}.  
\] 
Note that the right-hand side is evidently a (Laurent) power series in $Q$, $\frt_j^{-\frac{1}{r-1}}$, $\ttau$ with rational coefficients. 
\end{remark} 

\begin{remark} 
We have $h_{Z,j} \in \pi \iu H^2(Z;\Q)$ and 
$q_{Z,j}\in \frac{1}{\iu\sqrt{r-1}}\Q(e^{\frac{\pi\iu}{r-1}})\frt^{-\frac{r}{2(r-1)}} \subset 
\Q(e^{\frac{\pi\iu}{2(r-1)}}) \frq^{-\frac{r}{2(r-1)}}$. Here we note that $\iu, \sqrt{r-1} \in \Q(e^{\frac{\pi\iu}{2(r-1)}})$. 
\end{remark} 

\section{Hodge-theoretic properties}

In this section, we prove that the decomposition isomorphism \eqref{eq:Psi} is equivariant with respect to the (universal) Hodge group, as suggested by Katzarkov-Kontsevich-Pantev-Yu \cite{KKPY:birational}. 

First, we recall the Hodge group (also known as the special Mumford-Tate group) associated with a $\Q$-Hodge structure (see e.g.~\cite{GGK:MTgroups}). Let $H$ be a $\Q$-Hodge structure; that is, a finite-dimensional rational vector space $H$ equipped with a decomposition $H_\C = \bigoplus_{p,q\in \Z} H^{p,q}$ such that $H^{q,p} = \overline{H^{p,q}}$, where $H_\C = H\otimes \C$. Let $S^1 = \{z\in \C : |z|^2=1\} \cong \{(x,y) \in \R^2 : x^2 + y^2 = 1\}$ be the circle group, viewed as an algebraic group defined over $\R$. We consider the $S^1$-action on $H_\C$ given by $\lambda \cdot u^{p,q} = \lambda^{p-q} u^{p,q}$ for $\lambda \in S^1$ and $u^{p,q} \in H^{p,q}$. This action preserves the real form $H_\R = H \otimes \R$, inducing a homomorphism $h \colon S^1 \to \GL(H_\R)$.  The \emph{Hodge group} of $H$ is defined to be the smallest algebraic subgroup $\Hod_H \subset \GL(H)$, defined over $\Q$, such that the set of real points $\Hod_H(\R)$ contains the image $h(S^1)$. Note that a Tate twist of the Hodge structure does not change this group. When $H$ is the rational cohomology of a smooth projective variety, the $\Hod_H$-fixed subspace $H^{\Hod_H} = H\cap (\bigoplus_p H^{p,p})$ consists of rational Hodge classes. We refer to elements of $H_\C^{\Hod_{H,\C}} = H^{\Hod_H}\otimes \C$ as \emph{complexified Hodge classes}. 

The Hodge groups have the following Tannakian interpretation. Consider the category $(\operatorname{Hodge}_\Q)$ of polarizable $\Q$-Hodge structures and let $\cC=(\operatorname{Hodge}_\Q)/\operatorname{Tate}$ be its orbit category by Tate twists. The category $\cC$ is a neutral Tannakian category with fibre functor $\omega\colon \cC\to (\operatorname{Vect}_\Q)$ sending a Hodge structure to its underlying $\Q$-vector space. The \emph{universal Hodge group} is the affine group scheme $\Hod =\Aut^{\otimes}(\omega)$ associated with $\cC$ in the Tannakian formalism \cite[Chapter II]{DMOS:Hodge}. The universal Hodge group $\Hod$ can be described as the inverse limit\footnote{Here we regard $H\mapsto \Hod_H$ as an inverse system via inclusions of Hodge structures; any inclusion $H_1 \hookrightarrow H_2$ of Hodge structures induces a surjective group homomorphism $\Hod_{H_2}\twoheadrightarrow \Hod_{H_1}$.} $\varprojlim_H \Hod_H$ of Hodge groups over all polarizable $\Q$-Hodge structures $H$. 
It acts on each object $H$ of $\cC$ via the canonical surjection $\Hod \twoheadrightarrow \Hod_H\subset \GL(H)$ and every morphism in $\cC$ is $\Hod$-equivariant.


\begin{lemma} 
\label{lem:quantum_product} 
The big quantum product of a smooth projective variety $Y$ is equivariant with respect to the universal Hodge group $\Hod$, i.e. we have 
\[
g(\alpha \star_t \beta) = g(\alpha) \star_{g(t)} g(\beta) 
\]
for $g\in \Hod$, where $\alpha,\beta,t\in H^*(Y;\Q)$ and $\star_t$ denotes the big quantum product of $Y$ with parameter $t$. 
\end{lemma}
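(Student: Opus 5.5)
The plan is to reduce the statement to the fact that genus-zero Gromov--Witten invariants come from algebraic cycles, hence from morphisms of Hodge structures. By the Tannakian formalism recalled above, every morphism in the orbit category $\cC$ is $\Hod$-equivariant. So it suffices to express the big quantum product through such morphisms.

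The first step is to write
\[
\alpha \star_t \beta = \sum_{d\in \NEN(Y)} \sum_{n\ge 0} \frac{Q^d}{n!} \sum_i \corr{\alpha,\beta,t,\dots,t,\phi_{Y,i}}_{0,n+3,d}\, \phi_Y^i ,
\]
where $\{\phi_Y^i\}$ is the dual basis. This is the same as saying that the map $H^*(Y)^{\otimes(n+2)}\to H^*(Y)$, $\alpha\otimes\beta\otimes t^{\otimes n}\mapsto \sum_i\corr{\alpha,\beta,t,\dots,t,\phi_{Y,i}}_{0,n+3,d}\,\phi_Y^i$, is given by the correspondence
\[
(\mathrm{ev}_1\times\cdots\times \mathrm{ev}_{n+3})_*[\overline{M}_{0,n+3}(Y,d)]^{\rm vir} \in H_*(Y^{n+3};\Q).
\]
This class is the pushforward of the virtual fundamental class, which is a rational algebraic cycle class in the Chow group. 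Hence it is a rational Hodge class, possibly of Tate-twisted weight. Via Künneth and Poincaré duality, the induced map $H^*(Y)^{\otimes(n+2)}\to H^*(Y)$ is a morphism of $\Q$-Hodge structures up to a Tate twist, that is, a morphism in $\cC$. Since $\Hod$ acts on tensor products through the tensor structure ($\Hod=\Aut^\otimes(\omega)$), we get
\[
g\bigl(\mu_{n,d}(\alpha,\beta,t,\dots,t)\bigr)=\mu_{n,d}(g\alpha,g\beta,gt,\dots,gt)
\]
for each coefficient map $\mu_{n,d}$. Here $g$ acts trivially on the Novikov variables and on the grading. Summing over $n$ and $d$ gives the claim, interpreted formally in $t$ and $Q$; the same argument applies for $g$ with values in any $\Q$-algebra.

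Some care is needed with the parity and sign conventions for odd classes. One also has to note that the Künneth isomorphism and Poincaré duality are themselves isomorphisms in $\cC$: the Künneth isomorphism up to the Koszul sign, and Poincaré duality via the Tate-twisted pairing induced by the algebraic diagonal class. So the identification of the correspondence with a multilinear map respects the $\Hod$-action.

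The main obstacle is justifying that the virtual class pushes forward to an algebraic cycle class on $Y^{n+3}$. I would cite Behrend--Fantechi or Li--Tian, whose virtual class lives in the Chow group $A_*(\overline{M}_{0,n+3}(Y,d))_\Q$; its cycle class is therefore a Hodge class. The remaining bookkeeping is to verify that Tate twists are harmless in the orbit category $\cC$. The degree shift $\deg Q^d$ is exactly a Tate twist, so $\mu_{n,d}$ is a genuine morphism in $\cC$.
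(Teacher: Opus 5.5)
Your proof is correct and essentially follows the paper's route. In both, the key input is that the genus-zero Gromov--Witten correlators and the Poincar\'e pairing are rational multilinear forms arising from algebraic, hence Hodge, classes. The paper reduces to $\Hod_Y$ and notes that the $\Q$-algebraic stabilizer of each such $h(S^1)$-invariant form must contain $\Hod_Y$; you instead treat the virtual-class correspondence as a morphism in $\cC$ and use the Tannakian fact that morphisms in $\cC$ are $\Hod$-equivariant, which is the same idea in different packaging.
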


\begin{proof} 
We note that a stronger result, the equivariance with respect to the motivic Galois group of Andr\'e motives, is discussed in \cite[Proposition 3.40]{KKPY:birational}. 

It suffices to show that the quantum product is equivariant with respect to the Hodge group $\Hod_Y := \Hod_{H^*(Y;\Q)}$ associated with the rational cohomology of $Y$. Recall that the big quantum product for $Y$ is defined by the formula: 
\[
(\alpha\star_t \beta,\gamma) = \sum_{d\in \NEN(X)} \sum_{n=0}^\infty \corr{\alpha,\beta,\gamma,t,\dots,t}_{0,3+n,d}^Y \frac{Q^d}{n!}.  
\]
Thus, it suffices to show that the Gromov-Witten correlators  $\corr{\cdots}_{0,n,d}^Y$ and the Poincar\'e pairing $(\cdot,\cdot)$ are $\Hod_Y$-invariant. They are multilinear forms defined over $\Q$. The algebraicity of the (virtual) fundamental class implies that they are invariant under $h\colon S^1 \to \GL(H^*(Y;\R))$. The algebraic subgroup of $\GL(H^*(Y;\Q))$ preserving the form $\corr{\cdots}_{0,n,d}^Y$ or $(\cdot,\cdot)$ is defined over $\Q$ and the set of its real points contains $h(S^1)$. Hence, it must contain $\Hod_Y$. 
\end{proof} 

By Lemma \ref{lem:quantum_product}, we may regard the quantum $D$-module of $Y$ as a $\Hod_\C$-equivariant vector bundle equipped with a $\Hod_\C$-invariant flat connection, where $\Hod_\C$ acts on the parameter $t\in H^*(Y)$ and the cohomology fiber via the projection $\Hod_\C \to \Hod_{Y,\C}$, and acts trivially on $z$ and the Novikov variables  $Q,\tQ,Q_Z$. 

\begin{proposition} 
\label{prop:Hodge-equivariance} 
The formal maps $\tau \colon H^*(\tX)\to H^*(X)$, $\varsigma_j \colon H^*(\tX) \to H^*(Z)$ and the isomorphism $\Psi$ appearing in \eqref{eq:Psi} are $\Hod_\C$-equivariant. 
\end{proposition}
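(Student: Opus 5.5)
We follow the construction recalled in the proof of Proposition \ref{prop:cyclotomic}: $\Psi$, $\tau$, $\varsigma_j$ come from $\bPsi$, $\tbtau$, $\btau$, $\bvarsigma_j$, which are built from the Fourier transforms $\FT\sphat_\tX$, $\FT\sphat_X$, $\FT\sphat_{Z,j}$ of the equivariant quantum $D$-module of the master space $W$. The idea is to make $\Hod$ act on every space in the commutative diagram and check that every arrow is equivariant. Uniqueness then transfers equivariance to the restrictions. The first step is to let $\Hod$ act on $H^*_T(W;\Q)$, on $H^*(\tX;\Q)$, $H^*(X;\Q)$ and $H^*(Z;\Q)$. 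All the relevant maps ($\varphi^*$, $\jmath_*$, $\pi^*$, $\imath^*$, restrictions to $T$-fixed loci $i_F^*$, the classes $p$, $\rho_Z$, and equivariant Chern classes of normal bundles) are induced by algebraic correspondences. They are therefore morphisms of Hodge structures up to Tate twist and hence $\Hod$-equivariant. Here $\Hod$ acts trivially on the equivariant parameter $\lambda$, on $z$, and on the Novikov variables. The classes $\rho_Z$, $[Z]$, and hence $h_{Z,j}$ are Hodge classes, so they are $\Hod$-fixed.

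Next we apply Lemma \ref{lem:quantum_product}, and its $T$-equivariant analogue for $W$, which has the same proof since equivariant Gromov--Witten invariants are also given by algebraic cycles. This shows that the fundamental solutions $M_W$, $M_\tX$, $M_X$, $M_Z$ are $\Hod_\C$-equivariant: $g M_Y(t) g^{-1} = M_Y(g t)$. The discrete Fourier transforms $\sfF_\tX$, $\sfF_X$ and the transforms $\scrF_{Z,j}$ from Lemma \ref{lem:cont_FT} are then equivariant. They are assembled from $i_F^*$, cup products with Chern classes of normal bundles (through $\tDelta_\alpha$, $\rho_F$), the scalar operator $e^{(z\partial_u)^2}$, and scalar functions of $u$, $\lambda_0$. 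Each of these commutes with $\Hod$, because $\Hod$ acts on cohomology through ring automorphisms fixing Hodge classes; note that the cup product is $\Hod$-equivariant as the $Q=0$ case of the lemma.

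The main step is to pass from equivariance of the defining identities, such as
\[
\scrF_{Z,j}(M_W(\theta)\phi) = \frq^{-\frac{\rho_Z}{(r-1)z}} M_Z(h_{Z,j}+\ovbvarsigma_j(\theta))\,\FT\sphat_{Z,j}(\phi)
\]
and its analogues for $X$ and $\tX$, to equivariance of the unknowns $\tbtau$, $\btau$, $\bvarsigma_j$, $\FT\sphat$. Applying $g\in\Hod_\C$ to both sides and using the equivariance above, we see that $g\circ\bvarsigma_j\circ g^{-1}$ and $g\,\FT\sphat_{Z,j}(g^{-1}\cdot)$ satisfy the same characterizing identity. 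The point to check is that these data are uniquely determined by that identity, for instance through the Birkhoff factorization or asymptotic normalization used in \cite[Corollary 4.9, Proposition 5.1, 5.7]{Iritani:monoidal}, and that the normalization (the leading asymptotics in \ref{item:asymp_coordinatechange} and \ref{item:asymp_Psi}) is itself $\Hod$-invariant. It is, since the constants $\lambda_j$, $h_{Z,j}$, $q_{Z,j}$, $[Z]$ are scalars or Hodge classes. I expect this uniqueness to be the main obstacle. The fallback is to argue order by order in the $\frq^{-1/\frs}$, $Q$, $\theta$-adic filtration that the recursion defining these maps uses only equivariant operations.

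Finally, $\sfH\subset H^*_T(W)$ should be chosen $\Hod$-stable, or shown to be so; one natural choice is the span of lifts of the basis $\{\varphi^*\phi_{X,i},\jmath_*(p^l\pi^*\phi_{Z,k})\}$, and lifting is equivariant. Then $(\tbtau|_\sfH)^{-1}$ is equivariant, and so are the composites $\tau=\btau\circ(\tbtau|_\sfH)^{-1}$, $\varsigma_j=\bvarsigma_j\circ(\tbtau|_\sfH)^{-1}$ and the restriction $\Psi$ of $\bPsi$.
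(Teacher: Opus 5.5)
Your proposal is correct in outline. It is the ``direct'' route through the master-space construction, which the paper mentions as viable but does not carry out.

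\textbf{What you do.} You make $\Hod_\C$ act on the whole parameter space $H^*_T(W)$. You prove equivariance of the $T$-equivariant quantum theory of $W$ and of the transforms $\sfF_\tX$, $\sfF_X$, $\scrF_{Z,j}$. You transfer this to $\tbtau,\btau,\bvarsigma_j,\FT\sphat$ by uniqueness, and then restrict to $\sfH$.

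\textbf{What the paper does.} It uses the Hinault--Yu--Zhang--Zhang reconstruction: $M'$ and $\Psi^{-1}$ are the Birkhoff factors of $(\Psi^\circ)^{-1}\circ M$, where $M$ is block-diagonal in the non-equivariant fundamental solutions $M_X$, $M_Z$, and $\ttau$ is read off from $M'1=1+\ttau z^{-1}+O(z^{-2})$. So it only needs three things:
\begin{enumerate}
\item equivariance of $M_X$ and $M_Z$;
\item equivariance of the initial data $\tau^\circ,\varsigma_j^\circ,\Psi^\circ$ at $Q=\ttau=0$, which is the only place where $\scrF_{Z,j}$, $\kappa_X$, $i_Z^*$, $\kappa_\tX^{-1}=\dec_T\circ\dec^{-1}$ enter, and only through classical data;
\item uniqueness of one Birkhoff factorization.
\end{enumerate}
No equivariant quantum invariants of $W$ and no choice of $\sfH$ are involved.

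\textbf{Comparison.} Your route parallels Proposition~\ref{prop:cyclotomic} and gives, as a bonus, equivariance of $\bPsi$ and $\FT\sphat$ on all of $H^*_T(W)$. It requires more checks, and you leave some implicit:
\begin{enumerate}
\item the polarizable Hodge structure on $H^k_T(W;\Q)$, obtained via finite-dimensional approximations of the Borel construction;
\item the Kirwan maps $\kappa_X$, $\kappa_\tX$ that enter $\sfF_X$, $\sfF_\tX$; these are not correspondences, but they are morphisms of mixed Hodge structures through the open stable loci.
\end{enumerate}

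\textbf{Uniqueness.} The step you single out as the main obstacle is the same routine Birkhoff argument the paper uses. Suppose $M_Z(\sigma)A=M_Z(\sigma')A'$ with $A$, $A'$ regular in $z$, and $A$ admits a $z$-regular right inverse. For $A=\FT\sphat_{Z,j}$, take $(\FT\sphat_\tX)^{-1}\circ\bPsi^{-1}$ composed with the inclusion of the $j$-th summand. Then $M_Z(\sigma')^{-1}M_Z(\sigma)$ is both $\id+O(z^{-1})$ and regular in $z$, hence equals $\id$. Finally $\sigma$ is recovered from $M_Z(\sigma)1=1+\sigma z^{-1}+O(z^{-2})$. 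The leading asymptotics \ref{item:asymp_coordinatechange} and \ref{item:asymp_Psi} play no role here.

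\textbf{Choice of $\sfH$.} The span you propose for $\sfH$ is $\dec_T(H_{\rm decomp})$. It is $\Hod$-stable because $\dec_T$ is a morphism in $\cC$.
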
 

\begin{remark} 
We have an isomorphism of Hodge structures \cite[Theorem 7.31]{Voisin:Hodge_I}
\begin{equation} 
\label{eq:dec} 
\dec \colon H^*(X;\Q) \oplus \bigoplus_{j=0}^{r-2} H^*(Z;\Q)(-j-1) \cong H^*(\tX) 
\end{equation} 
defined by $\dec(\alpha,\beta_0,\dots,\beta_{r-2}) = \varphi^*\alpha + \sum_{j=0}^{r-2} \jmath_*(p^j \pi^* \beta_j)$. This is $\Hod$-equivariant. Therefore, the universal Hodge group $\Hod$ acts on the cohomologies of $X$, $Z$ and $\tX$ through the algebraic quotient $\Hod_{\tX} = \Hod_{H^*(\tX;\Q)}$. 
\end{remark} 

\begin{proof}[Proof of Proposition \ref{prop:Hodge-equivariance}] 
The proposition can be proved directly from the construction in \cite{Iritani:monoidal} (as we did in the proof of Proposition \ref{prop:cyclotomic}) or by using the reconstruction method of Hinault-Yu-Zhang-Zhang \cite{HYZZ:framing}. Here, we adopt the latter approach, which does not require the full details of the previous proof and is, hence, more transparent to the reader. 

We review the reconstruction of the decomposition $\Psi$ via the initial conditions and the Birkhoff factorization described in \cite[Section 5.8]{Iritani:monoidal}. To simplify notation, we write $H_{\rm decomp}:=H^*(X) \oplus H^*(Z)^{\oplus (r-1)}$. We start with the initial conditions 
\begin{align*} 
& \tau^\circ \in H^*(X) \otimes \C(\!(\frq^{-1})\!), \quad \varsigma_j^\circ \in H^*(Z)\otimes \C(\!(\frq^{-\frac{1}{r-1}})\!), \\
& \Psi^\circ \in \Hom(H^*(\tX), H_{\rm decomp}) \otimes \C[z](\!(\frq^{-1/\frs})\!).  
\end{align*} 
These quantities are given explicitly in \cite[Section 5.8]{Iritani:monoidal}. They are the restrictions, respectively, of $\tau(\ttau)$, $\varsigma_j(\ttau)$ and $\Psi$ to the locus $Q= \ttau=0$. 
We introduce a coordinate system $(t,s_0,\dots,s_{r-2})$ on $H^*(X)\oplus H^*(Z)^{\oplus (r-1)}$, which is related to the original one,  $(\tau, \varsigma_0,\dots,\varsigma_{r-2})$, by $\tau = \tau^\circ + t$ and $\varsigma_j = \varsigma_j^\circ + s_j$. We also relate it to $\ttau \in H^*(\tX)$ via the formal change of variables $\tau=\tau(\ttau)$ and $\varsigma_j = \varsigma_j(\ttau)$. 
Let $M\in \End(H_{\rm decomp})\otimes \C[z^{-1}](\!(\frq^{-1/\frs})\!)[\![Q,t,s]\!]$ be the following block-diagonal endomorphism: 
\[
M = \begin{pmatrix} 
e^{-\tau^\circ/z} M_X(\tau^\circ+t) & & & \\ 
& e^{-\varsigma_0^\circ/z} M_Z(\varsigma_0^\circ+s_0) & & \\
& & \ddots & \\
& & & e^{-\varsigma_{r-2}^\circ/z} M_Z(\varsigma_{r-2}^\circ+s_{r-2}) 
\end{pmatrix},
\]
where $M_X$ and $M_Z$ are the fundamental solutions \cite[(2.5)]{Iritani:monoidal} for the quantum connections of $X$ and $Z$, respectively, with the base change to $\C(\!(\frq^{-1/\frs})\!)[\![Q]\!]$ via \eqref{eq:embed} understood. We have $M|_{Q=t=s=0}=\id$. The isomorphism $\Psi$ as a function of $(t,s_0,\dots,s_{r-2})$ is uniquely determined by the equation 
\[
(\Psi^\circ)^{-1} \circ M =M' \circ \Psi^{-1},
\]
where 
\begin{align*} 
& M'\in \End(H^*(\tX))\otimes \C[z^{-1}](\!(\frq^{-1/\frs})\!)[\![Q,t,s]\!] \quad \text{satisfying $M'|_{z=\infty}=\id$,} \\
& \Psi\in \Hom(H^*(\tX),H_{\rm decomp})\otimes \C[z](\!(\frq^{-1/\frs})\!)[\![Q,t,s]\!] \quad \text{satisfying $\Psi|_{Q=t=s=0}=\Psi^\circ$.} 
\end{align*} 
Regarding $z$ as a loop parameter, we find that $M'$ and $\Psi$ are, respectively, the negative and positive Birkhoff factors of $(\Psi^\circ)^{-1} \circ M$. As explained in \cite[Section 5.8]{Iritani:monoidal}, $M'$ gives a fundamental solution for $\tX$ in the $(Q,t,s)$-direction. The initial condition $M'|_{Q=t=s=0}=\id$ implies that $M' = (M_\tX(\ttau)|_{Q=t=s=0})^{-1} \circ M_\tX(\ttau)$. The coordinate change between $\ttau$ and $(t,s_0,\dots,s_{r-2})$ is given by the asymptotics 
\begin{equation}
\label{eq:ttau_asymptotics} 
M' 1 = (M_\tX(\ttau)|_{Q=t=s=0})^{-1} \circ M_\tX(\ttau) 1 = 1 + \ttau z^{-1} + O(z^{-2}). 
\end{equation} 
where we used the fact that $M_\tX(\ttau) 1 = 1 +\ttau z^{-1} + O(z^{-2})$, $M_\tX(\ttau)^{-1} 1 = 1 - \ttau z^{-1} + O(z^{-2})$ and $\ttau|_{Q=t=s=0} = 0$. 
This determines $\ttau$ as a function of $(t,s_0,\dots,s_{r-2})$, and conversely, $(t,s_0,\dots,s_{r-2})$ (and consequently $(\tau,\varsigma_0,\dots,\varsigma_{r-2})$) as a function of $\ttau$. 

We proceed to the proof of the proposition. As we will see in Lemma \ref{lem:invariance_initial} below, $\tau^\circ$ and  $\varsigma_j^\circ$ are complexified Hodge classes (i.e.~fixed by $\Hod_\C$) and $\Psi^\circ$ is $\Hod_\C$-equivariant. The block-diagonal endomorphism $M=M(t,s_0,\dots,s_{r-2})$ is $\Hod_\C$-equivariant in the sense that 
\[
g M(t,s_0,\dots,s_{r-2}) g^{-1} = M(g(t), g(s_0),\dots,g(s_{r-2})) \qquad \text{for $g\in \Hod_\C$.}  
\]
This follows from the corresponding property $g M_Y(\ttt) g^{-1} = M_Y(g(\ttt))$ for the fundamental solution $M_Y$; since $M_Y$ is defined in terms of descendant Gromov-Witten invariants, we can deduce the $\Hod_\C$-equivariance of $M_Y$ by an argument similar to that in the proof of Lemma \ref{lem:quantum_product}. Hence, the composition $(\Psi^\circ)^{-1} \circ M$ satisfies the same $\Hod_\C$-equivariance. It is straightforward to prove that the Birkhoff factors $M'$ and $\Psi^{-1}$ inherit this  $\Hod_\C$-equivariance. In particular, the map $(t,s_0,\dots,s_{r-2})\mapsto \ttau$ determined by \eqref{eq:ttau_asymptotics} is also $\Hod_\C$-equivariant. 
\end{proof} 

\begin{lemma} 
\label{lem:invariance_initial} 
The initial conditions $\tau^\circ$ and $\varsigma_j^\circ$ are fixed by $\Hod_\C$ and $\Psi^\circ$ is $\Hod_\C$-equivariant. 
\end{lemma}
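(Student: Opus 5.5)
The plan is to read off the explicit formulas for $\tau^\circ$, $\varsigma_j^\circ$ and $\Psi^\circ$ from \cite[Section 5.8]{Iritani:monoidal}. Then I would check that every cohomology class entering them is a complexified Hodge class, and that every linear map entering them is built from $\Hod$-equivariant operations. The guiding principle is that all ingredients are produced from $H^*(X;\Q)$, $H^*(Z;\Q)$ and $H^*(\tX;\Q)$ by algebraic maps such as $\varphi^*$, $\jmath_*$, $\pi^*$, $\imath^*$, and by cup product with algebraic classes. Each such map is a morphism of $\Q$-Hodge structures up to Tate twist, hence a morphism in $\cC$. It is therefore $\Hod$-equivariant by the Tannakian description recalled before Lemma \ref{lem:quantum_product}. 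Scalar coefficients such as powers of $\frq$, $z$, $\lambda_j$ and $q_{Z,j}$ are fixed by $\Hod_\C$ and play no role.

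First I would treat the initial points $\tau^\circ$ and $\varsigma_j^\circ$. Their leading terms, $\frq^{-1}[Z]$ for $\tau^\circ$ and $-(r-1)\lambda_j + h_{Z,j}$ with $h_{Z,j} \propto \rho_Z = c_1(\cN_{Z/X})$ for $\varsigma_j^\circ$, are visibly Hodge classes. I expect the full expressions in \cite[Section 5.8]{Iritani:monoidal} to be obtained from the $Q=0$ limit of the equivariant $J$-function of $W$ (or of the local model near $Z$) via the discrete Fourier transform and the stationary-phase expansion of Lemma \ref{lem:cont_FT}. Setting $Q=\ttau=0$, these expressions should be polynomials, or power series, in $z$, $\frq^{\pm1/\frs}$ and characteristic classes of $\cN_{Z/X}$, $TZ$, $TX$ (e.g.\ through the factors $\tDelta_\alpha$, which involve Chern roots of $\cN_{Z/X}$), pushed forward along $\imath_*$ where needed. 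Chern classes of algebraic vector bundles are algebraic, so they are rational Hodge classes. Products and Gysin pushforwards of Hodge classes along algebraic maps remain Hodge classes. Thus $\tau^\circ \in H^*(X)^{\Hod_\C}\otimes\C(\!(\frq^{-1})\!)$ and $\varsigma_j^\circ \in H^*(Z)^{\Hod_\C}\otimes\C(\!(\frq^{-1/(r-1)})\!)$.

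Next I would treat $\Psi^\circ$. On the basis $\varphi^*\phi_{X,i}$, $\jmath_*(p^l\pi^*\phi_{Z,k})$, I expect it to have the shape indicated by \ref{item:asymp_Psi} with all corrections included. That is, $\Psi_X^\circ$ should be a combination of $\varphi_*$ (or the inverse of $\dec$ projected to $H^*(X)$) composed with cup products by Hodge classes. Likewise $\Psi_{Z,j}^\circ$ should be a combination of maps $\alpha \mapsto \pi_*(c\cdot\jmath^*\alpha)$ or $\imath^*\alpha \cdot c'$, where $c$ is a polynomial in $p$ and in Chern classes of $\cN_{Z/X}$, and $c'\in H^*(Z)$ is a Hodge class. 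Each such operator is a composition of morphisms of Hodge structures, possibly with Tate twists, and cup products with Hodge classes. Equivalently, these operators are algebraic correspondences and hence $\Hod$-equivariant. Passing to $\Hod_\C$ by complexification then gives the claim.

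The main obstacle is the first step: extracting from \cite{Iritani:monoidal} a sufficiently explicit description of the initial data. Those data are defined through Fourier transforms of the $T$-equivariant $J$-function of $W$, so one must see that restriction to $Q=0$ only involves classical equivariant cohomology of $W$ and its fixed loci. My first attempt would be structural rather than computational. The $Q=0$ part of $J_W$ is $e^{\theta/z}$ times equivariant characteristic classes of algebraic bundles. The fixed-locus restrictions $i_F^*$ are pullbacks along algebraic maps. The Fourier transforms $\scrF_{X}$, $\scrF_{\tX}$ and $\scrF_{Z,j}$ are then built from these by equivariant operations together with scalar factors. Hence all outputs lie in the span of Hodge classes, and the induced maps are $\Hod$-equivariant. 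The equivariant parameter $\lambda$ is a scalar and is fixed by $\Hod_\C$.
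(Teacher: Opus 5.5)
Your guiding principle, that everything is built from morphisms in $\cC$ and cup products with algebraic classes, is the one the paper uses. However, the proposal does not yet prove the lemma. The decisive step is left as a guess about the shape of $\tau^\circ$, $\varsigma_j^\circ$ and $\Psi^\circ$, and your guess for $\Psi^\circ$ misses its one non-formal ingredient.

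In \cite[Section 5.8.1]{Iritani:monoidal}, $\Psi^\circ(\gamma)$ is not produced by operations on $H^*(\tX)$ directly. One first lifts $\gamma\in H^*(\tX)$ to $H^*_T(W)$ through the surjective, but far from injective, Kirwan map $\kappa_\tX$. $\Psi^\circ$ is then built from this lift using $\kappa_X$, $i_Z^*$ followed by $\lambda\mapsto kz$, $\imath_*$ and $\frq^{\rho_Z/((r-1)z)}\scrF_{Z,j}$. To conclude that $\Psi^\circ$ is equivariant, the lift itself must be chosen $\Hod$-equivariantly, and your proposal never produces such a lift. The paper takes $\kappa_\tX^{-1}:=\dec_T\circ\dec^{-1}$. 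Here $\dec$ is the blowup decomposition \eqref{eq:dec}, and $\dec_T(\alpha,\beta_0,\dots,\beta_{r-2})=\hvarphi^*\pr_1^*\alpha+\sum_j\hjmath_*(\hp^j\hpi^*\beta_j)$ is an equivariant lift of it with $\kappa_\tX\circ\dec_T=\dec$; both are morphisms in $\cC$. You mention $\dec^{-1}$ in passing, which is the right ingredient. But your structural paragraph about Fourier transforms of $J_W$ never says how maps out of $H^*_T(W)$ induce a map out of $H^*(\tX)$, so it cannot replace this step.

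The second missing ingredient is an $\Hod$-action on $H^*_T(W;\Q)$ itself. Without it, the claim that $i_Z^*$, $\kappa_X$ and $\scrF_{Z,j}$ are equivariant has no meaning. Your remark that $\lambda$ is $\Hod_\C$-fixed is correct, but it presupposes this structure. The paper obtains it by identifying each $H^k_T(W;\Q)$ with the cohomology of a smooth projective finite-dimensional approximation of the Borel construction. This makes $H^k_T(W;\Q)$ a polarizable pure Hodge structure of weight $k$, on which $\lambda$ acts as a $(1,1)$ operator. The paper then checks that $\kappa_X$ is a morphism of mixed Hodge structures, as the composite of $i^*\colon H^*_T(W)\to H^*_T(W_\st)$ with $(\pr^*)^{-1}\colon H^*_T(W_\st)\cong H^*(X)$. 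It also checks that $i_Z^*$ and $\imath_*$ are morphisms in $\cC$.

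Once these two points are in place, your treatment of the remaining pieces matches the paper's. $\tau^\circ$ is $[z^{-1}]\log$ of a series in $\imath_*$ of products of the algebraic classes $e_{-\nu z}(\cN_{Z/X})$. $\varsigma_j^\circ$ is, up to the scalar $-(r-1)\lambda_j$, $[z^{-1}]\log$ of $\frq^{\rho_Z/((r-1)z)}\scrF_{Z,j}(1)$. By \eqref{eq:scrF0}, $\scrF_{Z,j}$ involves only $i_F^*$ and multiplication by algebraic classes such as the $\ch_i(\cN_{Z/W,\alpha})$ appearing in $\tDelta_\alpha$.
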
 
\begin{proof} 
We summarize the initial conditions $\tau^\circ$, $\varsigma_j^\circ$ and $\Psi^\circ$ given in \cite[Section 5.8.1]{Iritani:monoidal}. First, $\tau^\circ$ and $\varsigma_j^\circ$ are given by 
\begin{align*}
\begin{split} 
\tau^\circ & = [z^{-1}] \log \left( 1 + \sum_{k>0} \frq^{-k}
\frac{\imath_*(\prod_{\nu=1}^{k-1} e_{-\nu z}(\cN_{Z/X}))}{k!z^k}  \right), \\
\varsigma_j^\circ & = -(r-1) \lambda_j + [z^{-1}] \log  \left( \frq^{\rho_Z/((r-1)z)} \scrF_{Z,j}(1) \right),  
\end{split} 
\end{align*} 
where $e_\lambda(\cdots)$ is the equivariant Euler class \cite[(2.21)]{Iritani:monoidal} and $[z^{-1}](\cdots)$ denotes the coefficient of $z^{-1}$.  
We define a map $\dec_T\colon H_{\rm decomp} \to H^*_T(W)$ by 
\[
\dec_T(\alpha,\beta_0,\dots,\beta_{r-2}) = \hvarphi^*\pr_1^*\alpha + \sum_{j=0}^{r-2} \hjmath_*(\hp^j \hpi^*\beta_j), 
\]
where $\pr_1\colon X\times \PP^1 \to X$ is the first projection, $\hvarphi\colon W\to X\times \PP^1$ is the blowup along $Z\times \{0\}$ and the maps $\hjmath\colon \hD\to W$, $\hpi\colon \hD\to Z$ and the class $\hp\in H^2(\hD;\Z)$ are as in \cite[Section 3.5]{Iritani:monoidal}. This is a lift of $\dec$ in \eqref{eq:dec} and satisfies $\kappa_\tX \circ \dec_T = \dec$ for the Kirwan map $\kappa_\tX \colon H^*_T(W) \to H^*(\tX)$ (see \cite[Section 3.6]{Iritani:monoidal}). Let $\kappa_\tX^{-1}$ denote the right inverse of $\kappa_\tX$ given by 
\[
\kappa_\tX^{-1} := \dec_T \circ \dec^{-1} \colon H^*(\tX) \to H^*_T(W). 
\]
These maps $\dec$, $\dec_T$ and $\kappa_\tX^{-1}$ are $\Hod$-equivariant as they are morphisms in $\cC=(\operatorname{Hodge}_\Q)/\operatorname{Tate}$. 
We write $\Psi^\circ =(\Psi_X^\circ,\Psi_{Z,0}^\circ,\dots,\Psi_{Z,r-2}^\circ)$ as before, where $\Psi_X^\circ$ is the $\Hom(H^*(\tX),H^*(X))$-component and $\Psi_{Z,j}^\circ$ is the $j$-th $\Hom(H^*(\tX),H^*(Z))$-component. The map $\Psi^\circ$ is given by 
\begin{align*} 
\Psi_X^\circ (\gamma) & =  e^{-\tau^\circ/z}\left(\kappa_X(\kappa_\tX^{-1}\gamma) + \sum_{k>0} \frq^{-k}
\imath_*\left( \frac{\prod_{\nu=1}^{k-1} e_{-\nu z}(\cN_{Z/X})}{k!z^k}  \left[i_Z^*\kappa_\tX^{-1}\gamma \right]_{\lambda=kz} \right)\right) \\
\Psi_{Z,j}^\circ(\gamma) & = e^{-(\varsigma_j^\circ+(r-1)\lambda_j)/z} \frq^{\rho_Z/((r-1)z)} \scrF_{Z,j}(\kappa_\tX^{-1}\gamma)
\end{align*} 
for $\gamma \in H^*(\tX)$, where $i_Z \colon Z\to W$ is the inclusion and $\kappa_X \colon H^*_T(W) \to H^*(X)$ is the Kirwan map. 

We remark that $H_T^*(W;\Q)$ has a canonical polarizable $\Q$-Hodge structure, and hence is a $\Hod$-module. For a fixed degree $k$, $H^k_T(W;\Q)$ can be identified with the cohomology of a (sufficiently large) finite-dimensional approximation of the Borel construction, which is a smooth projective variety, and hence admits a polarizable pure $\Q$-Hodge structure of weight $k$. The equivariant parameter $\lambda \in H^2_T(\pt;\Z)$ acts on $H_T^*(W;\Q)$ as an operator of type $(1,1)$. 

By the construction of $\tau^\circ$, $\varsigma_j^\circ$ and $\Psi^\circ$ above, it suffices to prove that the following maps are $\Hod_\C$-equivariant: 
\begin{enumerate} 
\item multiplication by $e_{-\nu z}(\cN_{Z/X})$ in $H^*(Z)\otimes \C[z,z^{-1}]$; 
\item $\imath_* \colon H^*(Z) \to H^*(X)$; 
\item $i_Z^* \colon H^*_T(W) \to H^*(Z)\otimes \C[\lambda]$; 
\item $\kappa_X \colon H^*_T(W) \to H^*(X)$; and 
\item $\frq^{\rho_Z/((r-1)z)}\scrF_{Z,j} \colon H^*_T(W) \to \frq^{-\frac{r}{2(r-1)}}H^*(Z)\otimes \C[z,z^{-1}](\!(\frq^{-\frac{1}{r-1}})\!)$. 
\end{enumerate} 
Part (1) is obvious as $e_{-\nu z}(\cN_{Z/X})$ is an algebraic class. Parts (2)-(4) follow from the fact that the maps $\imath_*$, $i_Z^*$ and $\kappa_X$ are morphisms in  $\cC$. Note that the Kirwan map $\kappa_X$ is defined to be the composition 
\[
H^*_T(W) \xrightarrow{i^*} H^*_T(W_{\st}) \xrightarrow[\cong]{(\pr^*)^{-1}} H^*(W_{\st}/T) = H^*(X),
\]
in which all maps are morphisms of mixed Hodge structures, where $i\colon W_\st \to W$ is the inclusion of the open stable locus and $\pr \colon W_\st \to W_\st/T=X$ is the projection. (In the case at hand, $\kappa_X$ can also be described as the composition $H^*_T(W) \xrightarrow{i_X^*} H^*_T(X) = H^*(X)[\lambda] \xrightarrow{\lambda \to 0} H^*(X)$.) The map $\frq^{\rho_Z/((r-1)z)}\scrF_{Z,j}$ only involves  multiplication by algebraic classes and $i_F^*$ as can be seen from \eqref{eq:scrF0}, and hence is $\Hod_\C$-equivariant. Note here that the quantum Riemann-Roch operator $\tDelta_\alpha$ involves only multiplication by algebraic classes $\ch_i(\cN_{Z/W,\alpha})$. 
\end{proof} 

\begin{corollary} 
\label{cor:Hodge-fixed}
The formal map $(\tau,\varsigma_0,\dots,\varsigma_{r-2}) \colon H^*(\tX) \to H^*(X)\oplus H^*(Z)^{\oplus (r-1)}$ restricts to a formal isomorphism between the subspaces of complexified Hodge classes.  
When $\ttau$ is a complexified Hodge class, the decomposition isomorphism $\Psi|_\ttau$ restricts to an isomorphism between these Hodge subspaces. 
\end{corollary}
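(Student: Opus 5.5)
The plan is to derive the corollary formally from Proposition \ref{prop:Hodge-equivariance}: an equivariant map sends fixed points to fixed points. Write $F=(\tau,\varsigma_0,\dots,\varsigma_{r-2})$ and $H_{\rm decomp}=H^*(X)\oplus H^*(Z)^{\oplus(r-1)}$. Let $V\subset H^*(\tX)$ and $V'\subset H_{\rm decomp}$ be the subspaces of complexified Hodge classes, i.e.\ the $\Hod_\C$-fixed subspaces. Since $\Hod$ is reductive (the Hodge structures are polarizable), each of $H^*(\tX;\Q)$, $H^*(X;\Q)$, $H^*(Z;\Q)$ splits $\Hod$-equivariantly as the invariant part plus a complement with no invariants. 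This splitting is compatible with the Tate twists in \eqref{eq:dec}. In particular, $\dec$ identifies $V$ with $V'$, so the two subspaces have the same dimension.

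\textbf{First statement.} Choose coordinates adapted to $H^*(\tX)=V\oplus V^c$ and $H_{\rm decomp}=V'\oplus V'^c$. By Proposition \ref{prop:Hodge-equivariance}, $F(g\ttau)=gF(\ttau)$ as formal power series; the constant terms $\tau^\circ,\varsigma_j^\circ$ are $\Hod_\C$-fixed by Lemma \ref{lem:invariance_initial}. Substitute $\ttau\in V$. Then $F(\ttau)=F(g\ttau)=gF(\ttau)$ for all $g$, so every coefficient of the Taylor expansion of $F|_V$ in the $V$-coordinates lies in $(V')\otimes(\text{coefficient ring})$. Hence $F|_V$ is a formal map $V\to V'$.

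It remains to show that $F|_V$ is a formal isomorphism. The Jacobian $dF$ at $\ttau\in V$ is $\Hod_\C$-equivariant, obtained by differentiating the equivariance relation. It is therefore block-diagonal with respect to the isotypic decomposition (invariant part versus complement), so its restriction $V\to V'$ is one of the diagonal blocks. By (c), the full Jacobian is invertible over $\C(\!(\frq^{-\frac{1}{r-1}})\!)[\![Q,\ttau]\!]$. A block-diagonal invertible matrix has invertible diagonal blocks, so $d(F|_V)$ is invertible and the formal inverse function theorem applies.

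\textbf{Second statement.} For $\ttau\in V$, equivariance of $\Psi$ reads $g\circ\Psi|_{\ttau}\circ g^{-1}=\Psi|_{g\ttau}=\Psi|_{\ttau}$, so $\Psi|_\ttau$ commutes with $\Hod_\C$. It therefore maps $V$ into $V'$, and likewise $(\Psi|_\ttau)^{-1}$ maps $V'$ into $V$, so $\Psi|_\ttau$ restricts to an isomorphism $V\cong V'$, the fixed subspaces tensored with the coefficient ring.

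\textbf{Main obstacle.} The step needing the most care is the block-diagonality of $dF$. It requires that the $\Hod$-representation splits as invariants plus a complement without invariants in a way preserved by equivariant maps, which holds by complete reducibility of $\Hod_{\tX}$ acting on these polarizable Hodge structures. One must also make sense of equivariance for formal power series whose coefficients lie in the Novikov/Laurent ring, on which $\Hod_\C$ acts trivially. The plan is to argue coefficientwise, where all statements reduce to finite-dimensional linear algebra over $\C$.
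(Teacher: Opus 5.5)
Your proposal is correct and follows the paper's route. The paper states the corollary without a separate proof, as an immediate consequence of the $\Hod_\C$-equivariance in Proposition~\ref{prop:Hodge-equivariance}, and your argument (fixed points go to fixed points, and $\Psi|_\ttau$ commutes with $\Hod_\C$ when $\ttau$ is fixed) is exactly that reasoning made explicit.

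For the isomorphism claim you can skip reductivity and the block-diagonal Jacobian. The formal inverse $F^{-1}$, which exists by (c), is automatically equivariant, so it maps the fixed locus $V'$ (through $F(0)$) into $V$ and restricts to an inverse of $F|_V$.
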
 

\begin{remark} 
By a similar argument, we can show that the formal maps $\tau\colon H^*(\tX) \to H^*(X)$ and  $\varsigma_j\colon H^*(\tX) \to H^*(Z)$ preserve (complexified) algebraic classes, i.e.~complex linear combinations of the Poincar\'e duals of algebraic cycles, and that the isomorphism $\Psi|_\ttau$ preserves algebraic classes whenever the parameter $\ttau$ is algebraic. By the algebraic construction of virtual fundamental classes, we have a quantum product defined within Chow groups, and the fundamental solution $M_Y(t)$ preserves algebraic classes whenever the parameter $t$ is algebraic. 
\end{remark}


\bibliographystyle{amsplain}
\bibliography{notes_decomposition}

\end{document}